\newtheorem{theorem}{Theorem}
\newtheorem{lemma}{Lemma}
\newtheorem{prop}{Proposition}
\newtheorem{conj}{Conjecture}
\newcommand{\ignore}[1]{}
\newcommand{\bM}{\mathbf{M}}
\begin{document}

\title{Bounds for the Zero-Forcing Number of Graphs with Large Girth}
\author{Randy Davila\footnote{randy.r.davila@rice.edu; Rice University; Houston, TX, USA}, Franklin Kenter\footnote{Corresponding author: franklin.h.kenter@rice.edu; Rice University; Houston, TX, USA}}
\maketitle

\begin{abstract}
We investigate the zero-forcing number for triangle-free graphs. We improve upon the trivial bound, $\delta \le Z(G)$ where $\delta$ is the minimum degree, in the triangle-free case. In particular, we show that $2 \delta - 2 \le Z(G)$ for graphs with girth of at least 5, and this can be further improved when $G$ has a small cut set.
Lastly, we make a conjecture that the lower bound for $Z(G)$ increases as a function of the girth, $g$, and $\delta$.  
\end{abstract}

\section{Introduction}

The minimum rank of a graph seeks the minimum rank over all matrices whose sparsity pattern is determined by the graph. The zero forcing process, and the associated zero forcing number, were introduced by \cite{AIM-Workshop} and \cite{quantum1} in order to bound the minimum rank of a graph. Further, the zero forcing process and its variants have expanded applications in power network monitoring \cite{powerdom3, powerdom2}, quantum physics \cite{quantum1,quantum2}, and logic \cite{logic1}. Since then, the zero forcing number has gained much attention in graph theory and has been related to  many graph theoretic parameters including minimum degree, d'arboresense, treewidth, pathwidth, among others \cite{zf_tw}. We give a detailed description of the zero forcing process in the next section.

In general, the study of the zero forcing number is challenging for many reasons. First, it is difficult to compute exactly, as it is NP-hard \cite{zf_np}. Further, many of the known bounds leave a wide gap for graphs in general. For example, given the minimum and maximum degree of a graph, $\delta$ and $\Delta$ respectively, the zero forcing number on a graph with $n$ vertices can be as low as $\delta$ \cite{zf_tw} , and as high as $\frac{n \Delta}{\Delta+1}$ \cite{Yair}.

To date, the only result concerning minimum rank or zero forcing and triangle-free graphs is given by Deaett \cite{sdmr_tf} where it is shown that the \emph{semidefinite minimum rank} of a triangle-free graph is bounded below by half the number of vertices.
Our main contribution is the improvement of the lower bound mentioned above in the case of triangle free graphs and graphs with larger girth. 

This paper is organized as follows: In Section \ref{pre} we give the basic definitions and background. Next, we provide our main results with proofs in Section \ref{main}. 
Finally, in Section \ref{conj}, we conjecture improvements to our main results and provide computational evidence.

\section{Preliminaries}\label{pre}

Our study will be restricted to simple graphs with undirected edges and without loops. Let $G=(V,E)$ be a graph with vertex set $V$ and edge set $E$. We will use standard notation: $n=|V|$, $m=|E|$, minimum vertex degree $\delta(G)=\delta$, and maximum vertex degree $\Delta(G)=\Delta$.  The \emph{girth} of a graph $G$, denoted $g$, is the length of its shortest cycle. If $G$ is a tree, by convention, we say that $g=
\infty$. When $g>3$, we call a graph \emph{triangle-free}. The \emph{open neighborhood} of a vertex $v$, denoted $N(v)$, is the set of all vertices adjacent to $v$, excluding $v$ itself. The \emph{closed neighborhood}, $N[v]$, includes $v$. In addition, given $S \subset V$, let $G[S]$ denote the graph induced by $S$. 

We now define the \emph{zero forcing process} as first described by \cite{AIM-Workshop}:
Let $S=S_{0}$ be an initial set of ``colored'' vertices at time $t=0$. All other vertices are said to be ``uncolored''. At each step, a colored vertex $v$ ``forces'' an uncolored neighbor $w$ to become colored if all of the other neighbors of $v$ are colored. At time $t=1$, every initially colored vertex that can force a neighbor will, and the resulting set of forced neighbors and initially colored vertices we denote by $S_{1}$. The notation $S_{t}$ will be used to denote the set of vertices colored at time $t-1$ together with the all possible vertices forced by $S_{t-1}$ at time $t$.
Further, a vertex $v$ is called \emph{active} at time $t$, if it can possibly force at time $t+1$. 
The process of forming a colored set $S_{t}$ from $S_{t-1}$ we call the zero forcing process. If no vertex can be colored at a time step, then the zero forcing process terminates. This process results in a nested sequence of colored vertices $S = S_{0}\subset S_{1}\subset ... \subset S_{t}$. A set $S$ is called a \emph{zero forcing set}, if for some time $t$,  $S_{t}=V$. The \emph{zero forcing number} of a graph, denoted by $Z(G)$, is the cardinality of a smallest zero forcing set, and was shown to be an $NP$-hard invariant in \cite{zf_np}.

A classical lower bound for the zero forcing number is the following:

\begin{prop}[Barioli-Barrett-Fallat-Hall-Hogben-Shader-van der Holst \cite{zf_tw}] \label{trivial}For any graph $G$, 
\[Z(G) \ge \delta. \]
\end{prop}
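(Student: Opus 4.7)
The plan is to argue directly from the definition by focusing on the first forcing step of a minimum zero forcing set. Let $S$ be a zero forcing set of minimum size, so $|S| = Z(G)$. I want to show $|S| \ge \delta$.

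First I would handle the trivial case $S = V$, in which case $|S| = n$. Provided $G$ has a vertex of degree at least $\delta$, we have $n \ge \delta + 1 \ge \delta$ (if $\delta = 0$, the bound is vacuous).

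For the main case, assume $S \ne V$. Then at least one forcing step must occur, since $S$ is a zero forcing set. Let $v \in S$ be the vertex that performs the very first force, say $v$ forces $w \notin S$ at time $t=1$. By definition of the forcing rule, every neighbor of $v$ other than $w$ must already be colored at time $0$, i.e., must lie in $S$. Thus $N(v) \setminus \{w\} \subseteq S$, and since $v \notin N(v)$, the vertex $v$ contributes an additional element to $S$. Combining these,
\[
|S| \;\ge\; |N(v) \setminus \{w\}| + 1 \;=\; (\deg(v) - 1) + 1 \;=\; \deg(v) \;\ge\; \delta.
\]

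There is no real obstacle here; the only subtlety is making sure to invoke the \emph{first} force, so that no neighbor of $v$ outside $S$ has yet been colored by some earlier forcing step. Once that is pinned down the inequality follows immediately from counting $v$ together with its $\deg(v)-1$ forced-to-be-colored neighbors.
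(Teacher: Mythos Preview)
Your argument is correct and is the standard one. Note that the paper does not actually supply its own proof of this proposition; it is quoted as a known result from \cite{zf_tw}. That said, the paper implicitly relies on exactly the observation you make---namely, if $v$ forces $v'$ at time $t=1$ then $N[v]\setminus\{v'\}\subseteq S$---as the opening step in the proofs of Theorems~\ref{plus one} and~\ref{girth5}, so your reasoning is fully in line with how the paper treats this fact.
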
 

In a similar manner, Amos, Caro, Davila, and Pepper recently provided an upper bound regarding on the zero-forcing number:

\begin{prop}[Amos-Caro-Davila-Pepper \cite{Yair}] \label{maxdegreeupper}For any graph $G$ with $\delta \ge 1$, 
\[Z(G) \le \frac{n \Delta}{\Delta +1}. \]
\end{prop}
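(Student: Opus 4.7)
The plan is to prove the bound by explicit construction. Specifically, I aim to exhibit a subset $T \subseteq V$ of size at least $\frac{n}{\Delta+1}$, together with an injective ``forcer map'' $\phi: T \to V \setminus T$ and a linear order $\prec$ on $T$, such that $\phi(t)$ is adjacent to $t$ and has no neighbor in $\{t' \in T : t \prec t'\}$ other than $t$ itself. Once these are in hand, $S := V \setminus T$ is a zero-forcing set: processing the elements of $T$ in the order $\prec$, each $t$ is forced by $\phi(t)$, because at the moment $t$ is to be forced, every other neighbor of $\phi(t)$ is already colored (either it lies in $S$, or it is an earlier element of $\prec$ that has already been forced). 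The size bound then yields $Z(G) \le n - |T| \le \frac{n\Delta}{\Delta+1}$.

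First I would construct $T$, $\phi$, and $\prec$ greedily, processing vertices in what will become the reverse of the intended forcing order. At each step, I would look for a vertex $v$ that has not yet been used as a forcer and that has exactly one ``still-active'' neighbor $u$ (a neighbor not already placed in $T$). I would then append $u$ to $T$ as the next element of $\prec$ and set $\phi(u) := v$. The hypothesis $\delta \geq 1$ ensures a nontrivial starting configuration, since every vertex has at least one neighbor available as a potential forcer.

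The main quantitative step is to argue that the process must terminate with $|T| \geq \frac{n}{\Delta+1}$. The natural accounting assigns to each new $u \in T$ itself along with up to $\Delta$ auxiliary vertices that become ``blocked'' by the addition: the forcer $\phi(u)$ itself, and possibly some of $\phi(u)$'s other neighbors which must then lie in $S$. A discharging or averaging argument using this bookkeeping should show that the process adds at least $\lceil n/(\Delta+1) \rceil$ elements before it stalls.

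The hardest part will be guaranteeing that the resulting forcings are consistent, that is, that the dependency digraph on $T$ admits a valid linear order. One clean approach is to take $T$ to be extremal---a maximum-cardinality set for which some valid $(\phi, \prec)$ exists---and derive a contradiction if $|T| < \frac{n}{\Delta+1}$, by explicitly exhibiting an extension inside the uncovered portion of $G$. The $\delta \geq 1$ hypothesis is essential throughout: an isolated vertex can never be forced and must belong to the forcing set itself, so a graph with many isolated vertices would trivially violate the bound, and thus must be excluded.
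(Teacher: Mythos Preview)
The paper does not prove this proposition; it is quoted from Amos--Caro--Davila--Pepper and stated without proof, so there is no in-paper argument to compare against.

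Your overall framework---exhibit $T$ with $|T|\ge n/(\Delta+1)$ together with a forcer map and a linear order so that $V\setminus T$ is a zero-forcing set---is exactly the right shape and is essentially how the cited result is proved. However, the greedy step as you have written it cannot start. You ask for a vertex $v$ with \emph{exactly one} ``still-active'' neighbor, where ``still-active'' means ``not already in $T$.'' At the outset $T=\emptyset$, so every neighbor of every vertex is still active, and your condition forces $\deg(v)=1$; in any graph with $\delta\ge 2$ the procedure halts immediately with $T=\emptyset$. The direction is inverted: in the reverse-order construction you want $v$ to have \emph{no} neighbor in $T$ (equivalently, \emph{all} of its neighbors still active), and you then move one neighbor $u$ of $v$ into $T$. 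With that choice, when you later force in reverse construction order, at the moment $u_i$ is to be forced the uncolored set is $\{u_1,\dots,u_i\}$, and by construction $v_i$ has no neighbor in $\{u_1,\dots,u_{i-1}\}$, so $u_i$ is its unique uncolored neighbor.

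The counting you sketch is then the right one: moving $u_i$ into $T$ blocks at most the $\deg(u_i)\le\Delta$ neighbors of $u_i$ from ever serving as forcers, so each step ``uses up'' at most $\Delta+1$ vertices (the new $u_i$ together with its neighbors). One still has to argue that as long as fewer than $n/(\Delta+1)$ steps have occurred, an unblocked $v$ exists \emph{and} has a neighbor that is still eligible to be moved into $T$ (in particular, not already a designated forcer); this last wrinkle is where the $\delta\ge 1$ hypothesis and a little extra care are genuinely needed. Your extremal reformulation in the final paragraph would resolve this, but as stated the proposal does not yet contain that argument.
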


Further, it is clear that $Z(G)=n-1$ if and only if $G=K_{n}$. This result leads to the following proposition.

\begin{prop}[see for example \cite{row}]\label{zero forcing cuts}
Let $G$ be a non-empty graph with order $n$. Then,
\begin{equation*}
Z(G)\leq n-2,
\end{equation*}
whenever $G\neq K_{n}$.
\end{prop}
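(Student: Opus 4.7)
The plan is to exhibit, for every $G \neq K_n$, an explicit zero forcing set of size $n-2$. Since $G$ is not complete, fix a pair of non-adjacent vertices $u, v \in V$. My first attempt is to leave exactly these two vertices uncolored and take $S = V \setminus \{u, v\}$; then $S$ starts a force as soon as some $w \in S$ has a unique uncolored neighbor in $\{u, v\}$, i.e., as soon as the symmetric difference $N(u) \triangle N(v)$ is nonempty.

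In the main case, suppose some $w \in N(u) \setminus N(v)$ exists. Then $w \in S$, and $w$'s only uncolored neighbor is $u$, so $w$ forces $u$. After this step $v$ is the unique uncolored vertex, so any neighbor of $v$ (which exists because every vertex under consideration has positive degree) has $v$ as its only uncolored neighbor and in turn forces $v$. Hence $S$ is a zero forcing set of size $n-2$.

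The harder case is $N(u) \triangle N(v) = \emptyset$, i.e., when $u$ and $v$ are \emph{false twins}; here every vertex is adjacent to both of $u, v$ or to neither, so $S = V \setminus \{u, v\}$ cannot start any force. The fix is to move one of the uncolored slots onto a common neighbor: pick any $w \in N(u) = N(v)$ and set $S' = V \setminus \{u, w\}$. Now $v \in S'$ satisfies $v \sim w$ and $v \not\sim u$, so $v$'s only uncolored neighbor is $w$, and $v$ forces $w$. Once $w$ is colored, $u$ is the lone uncolored vertex, and $w$ itself then has $u$ as its only uncolored neighbor and forces it. Again $|S'| = n-2$.

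The main obstacle, and the key observation, is identifying the twin case: the obvious choice of uncolored pair is blocked precisely when $N(u) = N(v)$, and one must perturb it so that one of the two uncolored vertices is a neighbor of the other. Once that swap is made, each case reduces to two applications of the forcing rule.
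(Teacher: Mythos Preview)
The paper does not supply its own proof of this proposition---it is simply cited from the literature---so there is nothing to compare against. Your argument is the standard one and is correct.

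One caveat worth flagging: the step ``a neighbor of $v$ exists because every vertex under consideration has positive degree'' (and, in the twin case, the choice of $w\in N(u)=N(v)$) is not actually justified by ``non-empty'' in its usual sense $E(G)\neq\emptyset$. With that literal reading the proposition itself is false: $K_2$ together with an isolated vertex has $Z=2=n-1$ yet is not $K_3$. The intended hypothesis is evidently $\delta(G)\ge 1$ (equivalently, no isolated vertices), and under that reading both of your cases go through cleanly: in the main case $v$ has a neighbor to complete the second force, and in the twin case $N(u)=N(v)$ is nonempty so your perturbed set $S'=V\setminus\{u,w\}$ is well defined.
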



One of the primary motivations of zero forcing is the study of minimum rank of a graph. The minimum rank of a graph, denoted $mr(G)$, is the minimum rank over all symmetric $n \times n$ matrices $\bM$ such that $\bM_{ij} = 0$ whenever $i \ne j$ and $\{i,j\}$ is not an edge and $\bM_{ij} \ne 0$ when $\{i,j\}$ is an edge; the diagonal entries may take any value. The important connection between the minimum rank of graph and zero forcing is the following:

\begin{prop}[AIM-Group \cite{zf_tw}]\label{mrz}
For any graph $G$,
\[ Z(G) \ge n- {mr}(G) \]
\end{prop}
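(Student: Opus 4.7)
The plan is to exhibit, for any symmetric matrix $\bM$ realizing the sparsity pattern of $G$ and any zero forcing set $S$, an injective linear map from $\ker \bM$ into $\mathbb{R}^{|S|}$; once that is in hand the rank-nullity theorem finishes things off.

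The heart of the argument is a translation lemma: if $\bx \in \ker \bM$ satisfies $x_v = 0$ for every $v \in S$, then $\bx = \mathbf{0}$. I would prove this by induction along the zero forcing process. Base case: for $v \in S_0 = S$ we have $x_v = 0$ by assumption. Inductive step: suppose every vertex in $S_{t-1}$ has $x$-coordinate zero, and that at step $t$ an active vertex $v \in S_{t-1}$ forces a previously uncolored neighbor $w$. Expand the $v$-th coordinate of the equation $\bM \bx = \mathbf{0}$:
\[
\bM_{vv} x_v + \sum_{u \in N(v)} \bM_{vu} x_u = 0.
\]
Since $v$ and every neighbor of $v$ other than $w$ lies in $S_{t-1}$, the inductive hypothesis kills all those terms, leaving $\bM_{vw} x_w = 0$. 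The sparsity pattern of $\bM$ demands $\bM_{vw} \neq 0$ because $\{v,w\} \in E$, so $x_w = 0$. Since $S$ is a zero forcing set, iterating this reaches every vertex and forces $\bx = \mathbf{0}$.

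Having established that the restriction map $\bx \mapsto (x_v)_{v \in S}$ from $\ker \bM$ to $\mathbb{R}^{|S|}$ has trivial kernel, I conclude $\dim \ker \bM \le |S|$. Hence $\mathrm{rank}(\bM) \ge n - |S|$. Taking $S$ to be a zero forcing set of minimum size gives $\mathrm{rank}(\bM) \ge n - Z(G)$ for every matrix $\bM$ realizing the pattern of $G$, and minimizing over such $\bM$ yields $mr(G) \ge n - Z(G)$, which rearranges to the desired bound.

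The only subtle point is the careful indexing of the zero forcing process so that the inductive hypothesis is available precisely when each forcing step occurs; this is the main place where one could stumble if one tried to prove it by a global rather than step-by-step argument. Everything else (the sparsity pattern argument and the rank-nullity bookkeeping) is routine linear algebra.
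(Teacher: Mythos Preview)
Your argument is correct and is precisely the standard proof from the AIM-Group paper \cite{AIM-Workshop}. Note that the present paper does not supply its own proof of this proposition; it merely cites it as background, so there is no ``paper's own proof'' to compare against here. Your write-up matches the original: show that the coordinates of any null-space vector supported outside a zero forcing set propagate to zero along the forcing steps (using that $\bM_{vw}\neq 0$ on edges), conclude $\dim\ker\bM\le |S|$, and finish with rank--nullity.
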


%
%

%
 \section{Main Results}\label{main}

In this section, we describe our main results. The overlying concept of these results is that Proposition \ref{trivial} can be improved drastically under mild requirements. 

\begin{lemma}\label{minimumlemma}
Let $G$ be a triangle free graph. Let $S$ be a minimum zero forcing set of $G$. Let $v\in S$ force $w$ at time $t=1$, then $w$ has at least one neighbor not in $S$.
\end{lemma}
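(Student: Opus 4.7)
The plan is a contradiction argument: assuming toward contradiction that $N(w)\subseteq S$, I would construct a zero forcing set of size strictly less than $|S|$. The hypotheses give a clean local picture around $v$ and $w$: since $v$ forces $w$ at time $1$ we have $N(v)\setminus\{w\}\subseteq S$, and combined with the assumed $N(w)\subseteq S$, every vertex adjacent to $v$ or $w$ (other than $v,w$ themselves) lies in $S$. The triangle-free hypothesis supplies the crucial extra fact $N(v)\cap N(w)=\emptyset$, so in particular any $u\in N(w)\setminus\{v\}$ is non-adjacent to $v$ (else $u,v,w$ would form a triangle).

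I would start with a free ``swap'': the set $S^*=(S\setminus\{v\})\cup\{w\}$ has the same size as $S$ and is still a zero forcing set. Indeed, in the $S^*$-process, $w$'s only uncolored neighbor is $v$ (since $N(w)\setminus\{v\}\subseteq S\setminus\{v\}\subseteq S^*$), so $w$ forces $v$ at time $t=1$ and the resulting colored set contains $S\cup\{w\}$, from which the original chronology of the $S$-process completes the coloring. This step is a warm-up: it does not yet reduce cardinality, but it records that $v$ and $w$ play symmetric roles once $N(w)\subseteq S$.

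The heart of the argument is to shave off an additional vertex. Pick any $u\in N(w)\setminus\{v\}$ (available provided $\deg(w)\ge 2$; the degenerate $\deg(w)=1$ case would need to be handled separately) and consider $T=(S\setminus\{v,u\})\cup\{w\}$, a candidate of size $|S|-1$. I would aim to show $T$ is a zero forcing set by rechronologizing the $S$-process. The key idea is that the forcing chain starting at $v$ in the $S$-chronology is exactly the length-two chain $v\to w$ --- it terminates at $w$ because $N(w)\subseteq S$ means $w$ has no uncolored neighbor after being forced --- and this chain can be grafted onto the chain starting at $u$, producing $u\to\cdots\to w\to v$. Here $w$ forces $v$ once all of $w$'s other neighbors are colored, and $u$'s original forcing target is reached by a rerouted chain enabled by triangle-freeness.

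The main obstacle is verifying that this rechronology is globally consistent, i.e.\ that the $T$-process truly reaches $V$ rather than stalling with some residual uncolored ``fort''. The delicate bookkeeping involves showing that removing $u$ does not block any other force in the $S$-chronology, and that $v$ is indeed the last uncolored neighbor of $w$ at the moment we want $w$ to force it. Triangle-freeness is what makes this clean: it keeps the neighborhoods of $v$, $u$, and $w$ pairwise almost disjoint, so all interactions remain localized.
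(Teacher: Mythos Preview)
Your overall plan---assume $N(w)\subseteq S$ and produce a strictly smaller zero forcing set---is exactly right, and you have correctly isolated the role of triangle-freeness: any $u\in N(w)\setminus\{v\}$ is non-adjacent to $v$. But the swap $S^*=(S\setminus\{v\})\cup\{w\}$ is an unnecessary detour, and it is precisely what creates the ``main obstacle'' you flag. In your set $T=(S\setminus\{v,u\})\cup\{w\}$, the vertex $w$ has \emph{two} uncolored neighbors, $v$ and $u$, so $w$ cannot force at time $1$, and you have lost the one vertex ($v$) you knew to be active. Your rechronology sketch ``$u\to\cdots\to w\to v$'' does not make sense as written: $u\notin T$, so no chain starts at $u$, and you have given no mechanism by which either $u$ or $v$ first becomes colored. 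This is a genuine gap, not just bookkeeping.

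The paper's construction avoids all of this by simply taking $S'=S\setminus\{u\}$ and \emph{keeping} $v$. Since $u\notin N(v)$ by triangle-freeness, $v$ still has exactly one uncolored neighbor, namely $w$, so $v$ forces $w$ at time $1$. Now $w$ is colored and $N(w)\setminus\{u\}\subseteq S\setminus\{u\}=S'$, so $w$'s only uncolored neighbor is $u$, and $w$ forces $u$ at time $2$. At that point the colored set contains $S$, and the original $S$-process finishes the job. No chain grafting, no global consistency check---two local forces suffice. Your warm-up swap bought you nothing and cost you the active vertex; drop it and the proof is immediate.
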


\begin{proof}
Let $G$ be a graph with $\delta \geq 3$, and let $S_{0}$ be a minimum zero forcing set of $G$ such that $v$ forces $w$ at time $t=1$. By way of contradiction assume that $N(w)\subset S_{0}$. Since $\delta\geq 3$, we know there exists $z\in N(w)\setminus \{v\}$. Since $G$ is triangle-free, we know $z\notin N(v)$. Starting with a uncolored copy of $G$, define an initial set of colored vertices $S'_{0} = S_{0}\setminus \{z\}$. Since $v$ is not adjacent to $z$, $v$ forces $w$ at time $t=1$ under the new coloring $S'_{0}$. Since we have assumed that $N(w)$ is initially colored in $S_{0}$, we know that $N(w)\setminus \{z\}$ is colored in $S'_{0}$. It follows that at time $t=2$, $w$ will have $d(w)-1$ neighbors colored, and hence will force $z$. So we have shown that $S_{0} \subset S'_{2}$. Since $S_{0}$ was a zero forcing set of $G$, $S'_{0}$ must also be a zero forcing set of $G$, contradicting the minimality of $S_{0}$.
\end{proof}

\begin{theorem}\label{plus one}
Let $G$ be a triangle-free graph with minimum degree $\delta \ge 3$. Then,
\begin{equation*}
\delta +1 \leq Z(G).
\end{equation*}
\end{theorem}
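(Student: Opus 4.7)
The plan is to prove $Z(G) \geq \delta + 1$ by contradiction, starting from the trivial bound $Z(G) \geq \delta$ of Proposition \ref{trivial}: assume there is a minimum zero-forcing set $S_0$ with $|S_0| = \delta$, and derive a contradiction by analyzing what the very first forcing step forces $S_0$ to look like.

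First I would examine the initial force. Let $v \in S_0$ force $w$ at time $t = 1$. By the definition of forcing, $N(v) \setminus \{w\} \subseteq S_0$, and since also $v \in S_0$, we obtain $|S_0| \geq |N[v] \setminus \{w\}| = d(v) \geq \delta$. Equality throughout pins down $d(v) = \delta$ and $S_0 = N[v] \setminus \{w\}$, so that $S_1 = N[v]$. Lemma \ref{minimumlemma} applied at this point produces an uncolored neighbor of $w$, which in particular certifies that $V \setminus S_1 \neq \emptyset$.

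Next I would show that no vertex of $S_1$ can force at time $t = 2$, giving the desired contradiction. Triangle-freeness does the work here. For any $x \in N(v) \setminus \{w\}$, the vertex $x$ has no other neighbor in $N(v)$ (else a triangle through $v$) and is not adjacent to $w$ (else a triangle on $v, w, x$); thus $x$'s only neighbor inside $S_1 = N[v]$ is $v$ itself, leaving $d(x) - 1 \geq 2$ uncolored neighbors. The same reasoning shows $w$ has only $v$ as a neighbor in $S_1$, so $w$ has $d(w) - 1 \geq 2$ uncolored neighbors; finally $v$ has no uncolored neighbor at all since $N(v) \subseteq S_1$. Hence no further forcing can occur, contradicting the assumption that $S_0$ is a zero-forcing set.

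The main obstacle, such as it is, is just the careful bookkeeping: one has to pin down the exact shape $S_0 = N[v] \setminus \{w\}$ from the initial count, and then observe that triangle-freeness makes every vertex of $S_1$ adjacent to at most one vertex of $S_1$. Once this is in hand, the impossibility of a second force is an immediate degree count. A minor side check is that $n > \delta + 1$, which is automatic since $n = \delta + 1$ would force $G = K_{\delta+1}$, contradicting triangle-freeness for $\delta \geq 2$.
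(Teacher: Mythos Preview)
Your argument is correct and is a genuinely different route from the paper's. The paper does not argue by contradiction from $|S|=\delta$; instead it takes an arbitrary minimum zero-forcing set $S$, lets $v$ force $v'$ at time $t=1$, and then exhibits an explicit initially colored vertex outside $N[v]\setminus\{v'\}$ by tracking a \emph{second} vertex $w$ that eventually forces. This requires a case split on whether $w$ forces at $t=1$ or later, with further subcases on whether $w\in N(v)$, and it invokes Lemma~\ref{minimumlemma} in one branch. Your approach sidesteps all of that: from $|S_0|=\delta$ you pin down $S_0=N[v]\setminus\{w\}$ exactly, and then the single observation that triangle-freeness makes $G[N[v]]$ a star forces every vertex of $S_1\setminus\{v\}$ to have at least $\delta-1\ge 2$ uncolored neighbors, so the process stalls. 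This is shorter and avoids the second-forcer bookkeeping; you also do not really need Lemma~\ref{minimumlemma}, since your own degree count on $w$ (or the side check $n>\delta+1$) already guarantees $V\setminus S_1\neq\emptyset$. The trade-off is that the paper's case-analysis template is what it later amplifies to prove the $2\delta-2$ bound for girth $\ge 5$ (Theorem~\ref{girth5}), so its argument is doing double duty there; your stalling argument, while cleaner here, would need a different extension to reach that stronger result.

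One small wording point: in your summary you say ``every vertex of $S_1$ is adjacent to at most one vertex of $S_1$,'' but of course $v$ itself is adjacent to all of $S_1\setminus\{v\}$. Your actual case-by-case analysis handles $v$ separately and is fine; only the summary sentence overstates it.
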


\begin{proof}
Suppose $G$ is triangle-free, and let $S$ be a zero forcing set realizing $Z(G)$. Since $G$ is not a complete graph, nor an empty graph, by Proposition \ref{zero forcing cuts}, we know at least two forcing vertices exist, $v$ and $w$. Note that $v$ and $w$ may force at separate times. Without loss of generality, suppose $v$ forces $v'$, at time $t=1$. Hence, 
$N[v]\setminus\{v'\}\subseteq S$. It suffices to show that there is an initially colored vertex not in $N[v]\setminus\{v'\}$ at time $t=1$.

Since $w$ is assumed to force eventually, we have the following cases:
\begin{enumerate}
\item $w$ forces at time $t=1$.
\item $w$ forces at time $t\geq 2$.
\end{enumerate}

Case 1. Suppose $w$ forces $w'$, at time $t=1$. If $w$ is not in the neighborhood of $v$, then since $w$ was initially colored, we are done.  
Hence, suppose $w$ is in the neighborhood of $v$. Then because $G$ is triangle-free, $w$ cannot be adjacent to any neighbors of $v$, otherwise $G$ would have a triangle. Furthermore, by assumption $\delta \geq 3$, so $w$ must have at least one neighbor other than $v$ and $w'$, $z$ (say). Since $w$ forces $w'$ at time $t=1$, $z$ must be colored at time $t=0$. Since $w$ is adjacent to $v$, $z$ cannot be a neighbor of $v$, otherwise $G$ would contain a triangle. Therefore $z$ is initially colored outside $N[v]\setminus\{w\}$, and we are done. 
Altogether if there are two vertices $v$ and $w$ that force at time $t=1$, the theorem holds. So for the remaining case we may assume that $v$ is the unique forcing vertex at time $t=1$.

Case 2. Suppose that $w$ forces at some time $t\geq 2$. Since forcing steps occur at each time step, there must be a vertex that forces at time $t=2$; without loss of generality, assume that it is $w$. Since $w$ became active at time $t=1$, and since $v$ is assumed to be the unique forcing vertex at time $t=1$, it must be the case that either $v$ forced $w$ or a neighbor of $w$. If $v$ forces $w$, then we are done by Lemma \ref{minimumlemma}. Otherwise $v$ forced a neighbor of $w$, and therefore $w$ cannot be in the neighborhood of $v$ as $G$ would contain a triangle.
\end{proof}

The following lemma is a useful tool for finding large zero forcing sets.

\begin{lemma}\label{setdiff}
Let $S$ be a set of colored vertices of $G$, and let $B\subseteq S$ be a set of vertices whose entire neighborhood is colored. Then $S$ is a zero forcing set of $G$ if and only if $S\setminus B$ is a zero forcing set of $G[V\setminus B]$.
\end{lemma}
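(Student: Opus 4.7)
The plan is to show that the zero-forcing process on $G$ starting from $S$ and the one on $G[V\setminus B]$ starting from $S\setminus B$ execute exactly the same sequence of forcing moves. The first step I would take is to establish the key structural observation: no vertex in $V\setminus S$ is adjacent to any vertex of $B$. Indeed, if $u\in V\setminus S$ were in $N(b)$ for some $b\in B$, then $u\in N(b)\subseteq S$, a contradiction. Because $B\subseteq S$ is initially colored and colored vertices stay colored, the entire neighborhood of every $b\in B$ remains colored throughout the forcing process in $G$.

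I would then argue by induction on $t$, letting $S_t$ denote the colored set at time $t$ of the process on $G$ starting from $S$ and $S'_t$ the colored set of the process on $G[V\setminus B]$ starting from $S\setminus B$, that $S_t\cap(V\setminus B)=S'_t$. The base case $t=0$ is immediate. For the inductive step, I would first note that no $b\in B$ can ever be a forcer in $G$: all of its neighbors already lie in $S\subseteq S_t$, so $b$ has no uncolored neighbor. Hence every forcer at time $t+1$ belongs to $V\setminus B$. For such a $v$, its uncolored neighbors in $G$ at time $t$ are $N(v)\setminus S_t$, and because $N(v)\cap B\subseteq S\subseteq S_t$, this equals $(N(v)\setminus B)\setminus S_t$, which by the inductive hypothesis is exactly the set of uncolored neighbors of $v$ in $G[V\setminus B]$ at time $t$. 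So $v$ has exactly one uncolored neighbor in $G$ if and only if it does so in $G[V\setminus B]$, and the forced vertex, being uncolored, lies in $V\setminus S\subseteq V\setminus B$. The two processes therefore make the same moves on $V\setminus B$ at every step.

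The equivalence now falls out: $S$ is a zero forcing set of $G$ if and only if $S_t=V$ for some $t$, which (since $B\subseteq S\subseteq S_t$ always) is equivalent to $S_t\cap(V\setminus B)=V\setminus B$, equivalently $S'_t=V\setminus B$, which says exactly that $S\setminus B$ is a zero forcing set of $G[V\setminus B]$.

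The main obstacle I anticipate is in the inductive step, where the ``exactly one uncolored neighbor'' condition must be matched across two different host graphs. The opening observation tames this difficulty entirely, because the neighbors of a non-$B$ vertex that lie in $B$ are permanently colored in the $G$-process, so they contribute nothing to the uncolored count, and their deletion in $G[V\setminus B]$ is therefore harmless.
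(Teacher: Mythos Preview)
Your proof is correct and follows essentially the same approach as the paper: both arguments hinge on the observation that vertices of $B$ are inactive (having no uncolored neighbors) and that the forcing process on $G$ starting from $S$ therefore proceeds identically to the one on $G[V\setminus B]$ starting from $S\setminus B$. Your version is more rigorous, making explicit via induction on time steps what the paper asserts only informally.
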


\begin{proof}
\emph{Necessity:} Suppose that $S\setminus B$ is a zero forcing set of $G$. Then adding any set of already colored vertices back to $G$ will clearly still be a zero forcing set of $G$.

\emph{Sufficiency:} Suppose that $S\subseteq V$ is a zero forcing set of $G$. Let $B\subseteq S$ be a set of colored vertices with only colored neighbors. Note that every vertex of $B$ is inactive. Furthermore, any currently active vertex will be active in the graph $V\setminus B$, since no vertex in $B$ has any white neighbors, and hence any active vertex in $S$ will have exactly one white neighbor in $V\setminus B$. Forcing chains will hence proceed as if they were in $G$, and must force the rest of the graph. 
\end{proof}

The following theorem improves on proposition 2 for some families of graphs, such as cycles, but in general is non-comparable. Note that this theorem makes sense only if the graph in question contains a cycle. 

\begin{theorem}\label{GirthUpperBound}
Let $G$ be a graph with girth $g$. Then,
\begin{equation*}
Z(G)\leq n-g+2,
\end{equation*}
and this bound is sharp.
\end{theorem}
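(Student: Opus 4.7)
The plan is to construct an explicit zero forcing set of size $n - g + 2$ from a shortest cycle, then verify sharpness on a family where equality clearly holds.

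First I would pick a shortest cycle $C = v_1 v_2 \cdots v_g v_1$ in $G$. The key structural observation is that $C$ must be an induced subgraph: any chord $v_i v_j$ with $|i-j| \pmod g \notin \{0,1\}$ would split $C$ into two shorter closed walks, both of length strictly less than $g$ but at least $3$, contradicting the definition of girth. Consequently, for each vertex $v_i$ on the cycle, its only neighbors among $\{v_1, \ldots, v_g\}$ are $v_{i-1}$ and $v_{i+1}$ (indices mod $g$).

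Next I would take $S = V \setminus \{v_2, v_3, \ldots, v_{g-1}\}$, so $|S| = n - (g-2) = n - g + 2$, and show $S$ is a zero forcing set by demonstrating a chain of forces $v_1 \to v_2 \to v_3 \to \cdots \to v_{g-1}$. At time $t = 1$, consider $v_1 \in S$: its only neighbors in the cycle are $v_2$ and $v_g$, and $v_g \in S$ is colored; meanwhile every neighbor of $v_1$ outside the cycle lies in $V \setminus \{v_2, \ldots, v_{g-1}\} = S$ and is also colored. So $v_2$ is the unique uncolored neighbor of $v_1$, and $v_1$ forces $v_2$. Inductively, once $v_1, \ldots, v_i$ are colored with $i \le g-2$, the vertex $v_i$ has $v_{i-1}$ colored, has no other cycle neighbors by the induced-cycle property, and has all off-cycle neighbors colored since they lie in $S$; hence $v_{i+1}$ is its unique uncolored neighbor and gets forced. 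This propagates until $v_{g-1}$ is colored, showing $S$ is a zero forcing set and thus $Z(G) \le n - g + 2$.

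For sharpness, I would exhibit the cycle $G = C_n$: here $g = n$, the bound gives $Z(G) \le 2$, and it is well known (and easy to check directly) that $Z(C_n) = 2$, so the bound is attained. The main ``obstacle,'' which is really the only subtlety, is justifying the chordlessness of a shortest cycle; once that is in hand the forcing chain is immediate. I would not expect any difficulty with sharpness since cycles are the natural extremal example.
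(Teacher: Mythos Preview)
Your proof is correct and follows essentially the same approach as the paper: both construct the zero forcing set consisting of all vertices off a shortest cycle together with two adjacent cycle vertices, and both verify sharpness via $C_n$. The only presentational difference is that the paper invokes its Lemma~\ref{setdiff} to strip away the inactive colored vertices and reduce to forcing on the induced cycle, whereas you argue the forcing chain $v_1 \to v_2 \to \cdots \to v_{g-1}$ directly and make the chordlessness of a shortest cycle explicit; your version is self-contained and arguably cleaner for this particular statement.
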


\begin{proof}
Let $G$ be a graph with girth $g$. Let $C\subseteq V$, be set of vertices realizing the girth of $G$ that induce a cycle. Color the set $V\setminus C$. Next color any two neighbors of $C$, and call this collection of colored vertices $S$. If $B$ is the set of all colored vertices with only colored neighbors, observe that $S\setminus B$ is a zero forcing set of $G[V\setminus B]$. By Lemma \ref{setdiff}, it follows that $S$ is a zero forcing set of $G$. Since a cycle of order $n$ has girth $g=n$, we see that this bound is sharp.
\end{proof}



\begin{lemma}\label{two neighbors}
Let $G$ be a triangle-free graph with minimum degree $\delta\geq 2$. If every minimum zero forcing set requires at least two neighbors to force at time $t=1$, then
\begin{equation*}
2\delta-2\leq Z(G).
\end{equation*}
\end{lemma}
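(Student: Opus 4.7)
The plan is as follows. Let $S$ be a minimum zero forcing set of $G$. By hypothesis there exist two vertices $v_1, v_2 \in S$ that are neighbors of each other and each of which forces some vertex at time $t=1$; call their forced targets $w_1$ and $w_2$ respectively. I will bound $|S|$ from below by counting vertices of $S$ inside the neighborhoods of $v_1$ and $v_2$, after showing those neighborhoods are disjoint.

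First I would record that $w_1 \ne w_2$: otherwise their common forcing target would be a common neighbor of the adjacent pair $v_1, v_2$, producing a triangle $v_1 v_2 w_1$ and contradicting triangle-freeness. Next, since $v_i$ forces $w_i$ at time $t=1$, every other neighbor of $v_i$ must already lie in $S$, so $N(v_i) \setminus \{w_i\} \subseteq S$ for each $i \in \{1,2\}$, and each such set has cardinality at least $\delta - 1$.

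The key step is disjointness. Because $v_1 v_2 \in E$ and $G$ is triangle-free, no vertex can be adjacent to both $v_1$ and $v_2$; hence $N(v_1) \cap N(v_2) = \emptyset$, and therefore $(N(v_1) \setminus \{w_1\}) \cap (N(v_2) \setminus \{w_2\}) = \emptyset$ as well. Combining the pieces,
\[
|S| \;\ge\; |N(v_1) \setminus \{w_1\}| + |N(v_2) \setminus \{w_2\}| \;\ge\; 2(\delta - 1) \;=\; 2\delta - 2,
\]
which is exactly $Z(G) \ge 2\delta - 2$.

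The only real subtlety is parsing the hypothesis correctly, namely that every minimum zero forcing set contains two \emph{adjacent} vertices that both perform a force at time $t=1$; once this is in hand, the bound is just a triangle-free neighborhood count and there is no case analysis to grind through.
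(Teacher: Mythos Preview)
Your proof is correct and follows essentially the same route as the paper: pick two adjacent vertices that both force at $t=1$, note that triangle-freeness makes their open neighborhoods disjoint, and count. Your write-up is in fact tidier than the paper's, which phrases the count via closed neighborhoods (giving $\delta + \delta$ with an overlap of $2$ at $\{v_1,v_2\}$) rather than open neighborhoods (your $(\delta-1)+(\delta-1)$), but the arithmetic and the idea are identical.
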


\begin{proof}
Since $\delta =2$ satisfies the Lemma \ref{two neighbors} trivially, assume $G$ is a triangle-free graph with $\delta \geq 3$ such that every zero forcing set requires at least two neighbors to force at time $t=1$. Let $v_{0}$ and $w_{0}$ be vertices of a minimum zero forcing set $S$ which are neighbors and both force at time $t=1$. By Lemma 2, if $v_{0}$ has $\delta$ neighbors in $S$, then $w_{0}$ has at least $\delta -2$ neighbors in $S_{0}$. Hence $2\delta -2\leq |S|=Z(G)$.
\end{proof}

For graphs with girth greater than 4, we are able to show that the minimum degree lower bound can be improved by a factor of almost 2.

\begin{theorem}\label{girth5}
Let $G$ be a graph with girth $g\geq 5$, and minimum degree $\delta\geq 2$. Then, 
\begin{equation*}
2\delta - 2\leq Z(G).
\end{equation*}
\end{theorem}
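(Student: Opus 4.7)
The plan is to invoke Lemma \ref{two neighbors} where its hypothesis applies, and to use the stronger girth-$5$ condition directly in the remaining cases. First I would dispense with $\delta = 2$ via Proposition \ref{trivial}, since then $Z(G) \geq \delta = 2 = 2\delta - 2$. For $\delta \geq 3$ I would fix a minimum zero forcing set $S$ and split on the forcings that occur at time $t = 1$.

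The first case is that some minimum zero forcing set contains two adjacent vertices $v_0, w_0$ both forcing at $t = 1$; then the argument inside the proof of Lemma \ref{two neighbors} applies verbatim, using only triangle-freeness, to give $|S| \geq 2\delta - 2$. Otherwise no two forcers at $t = 1$ are mutually adjacent. In the subcase of at least two such non-adjacent forcers $v_0, w_0$ with targets $v_0', w_0'$, the sets $\{v_0\} \cup (N(v_0) \setminus \{v_0'\})$ and $\{w_0\} \cup (N(w_0) \setminus \{w_0'\})$ are each $\delta$-element subsets of $S$; since $g \geq 5$ forbids both $C_3$ and $C_4$, the vertices $v_0$ and $w_0$ share at most one common neighbor and neither lies in the other's open neighborhood, so the two sets overlap in at most one vertex and $|S| \geq 2\delta - 1$.

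The remaining subcase is that $v_0$ is the unique forcer at $t = 1$. Proposition \ref{zero forcing cuts} gives $|S| \leq n - 2$, so the process cannot halt after one step, and some vertex must force at $t = 2$. That vertex is either $v_0'$ itself, or an initially colored $w_0 \in S$ with $w_0 \neq v_0$ newly activated by the coloring of $v_0'$ (so $w_0 \sim v_0'$). In the first case, triangle-freeness forbids any of the $\geq \delta - 1$ colored neighbors of $v_0'$, apart from $v_0$, from lying in $N(v_0)$, contributing $\delta - 2$ new vertices to $S$ and giving $|S| \geq 2\delta - 2$. In the second case, triangle-freeness forces $w_0 \notin N[v_0]$, and any shared neighbor $u$ of $w_0$ and $v_0$ would produce a $4$-cycle $v_0, u, w_0, v_0'$ forbidden by $g \geq 5$; thus $w_0$ together with its $\geq \delta - 2$ initially colored neighbors distinct from $v_0'$ contribute $\delta - 1$ more vertices to $S$, yielding $|S| \geq 2\delta - 1$.

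I expect the main obstacle to be this single-forcer subcase, where the analysis must be pushed one step beyond the first forcing and the absence of $4$-cycles, not merely triangles, is essential to block overlaps between the newly-counted colored vertices and $N[v_0]$. This is precisely where the upgrade from $g \geq 4$ to $g \geq 5$ does real work in the theorem.
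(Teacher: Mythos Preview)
Your proof is correct and follows essentially the same strategy as the paper's: dispose of $\delta=2$, then for a minimum zero forcing set split on whether there are two forcers at $t=1$ (handling the adjacent and non-adjacent subcases via Lemma~\ref{two neighbors} and the $C_4$-free condition, respectively) or a unique forcer, in which case the time-$2$ forcer is analyzed using girth $\ge 5$ to separate the relevant neighborhoods. Your single-forcer branch is in fact slightly more careful than the paper's, since you explicitly treat the possibility that the time-$2$ forcer is $v_0'$ itself.
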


\begin{proof}
Notice that when $\delta =2$, the theorem holds. So we consider graphs with $\delta \geq 3$.

Let $G$ be a graph with girth $g\geq 5$, and minimum degree $\delta\geq 3$. Let $S$ be a zero forcing set realizing $Z(G)$. Since $G$ is not a complete graph, nor an empty graph, we know at least two forcing vertices exist, $v$ and $w$ (say). Note that $v$ and $w$ may force at separate times. Without loss of generality, suppose $v$ forces $v'$ (say), at time $t=1$. Hence, 
$N[v]\setminus\{v'\}\subseteq S$. 

Since $w$ is assumed to force eventually, we have the following cases:
\begin{enumerate}
\item $w$ forces at time $t=1$.
\item $w$ forces at time $t\geq 2$.
\end{enumerate}

Case 1. Suppose $w$ forces $w'$ (say), at time $t=1$. Note $w$ is either in the neighborhood of $v$, or it isn't. If $w$ is not in the neighborhood of $v$, then because $w$ and all neighbors of $w$ other then $w'$ were colored at time $t=0$, we have
\begin{equation*}
|S|\geq \delta +\delta -k,
\end{equation*}
Where $k$ is the (possibly zero) number of vertices in 
\begin{equation*}
\big(N[v]\setminus\{v'\}\big)\cap\big(N[w]\setminus\{w'\}\big).
\end{equation*}
Because $g\geq 5$, $w$ cannot be adjacent to more than one neighbor of $v$, since otherwise $v$, $w$, and the two shared neighbors would induce a 4-cycle. Hence, 
\begin{equation*}
\begin{split}
|S|
&\geq \delta + \delta - 1\\
&= 2\delta -1\\
&> 2\delta -2,
\end{split}
\end{equation*}
and the theorem holds.

Next suppose $w$ is in the neighborhood of $v$. Then, because $G$ is triangle-free with $\delta\geq 3$, we recall lemma \ref{two neighbors}, and get,
\begin{equation*}
|S|\geq 2\delta -2.
\end{equation*}


So if $w$ forces at time $t=1$, the theorem holds. Since $w$ was arbitrary, if any vertex other than $v$ forces at time $t=1$, then the theorem holds. Hence, for Case 2 we may assume that $v$ is the unique forcing vertex at time $t=1$.

Case 2. Suppose that $w$ forces at some time $t\geq 2$. Since forcing steps occur at each time step, there must be a vertex that forces at time $t=2$, without loss of generality we take $w$ to be this vertex. Since $w$ became active at time $t=1$, and since $v$ is assumed to be the unique forcing vertex at time $t=1$, it must be the case that $v$ forced a neighbor of $w$, i.e., $v'$ is a neighbor of $w$. Because $v$ forced a neighbor of $w$, $w$ cannot be in the neighborhood of $v$, since otherwise $v$,$v'$, and $w$ would induce a triangle. So it must be the case that $w$ was initially colored. Furthermore, since $w$ is a forcing vertex at time $t=2$, it must have at least $\delta -1$ neighbors that are colored at time $t=1$. Since $v$ was the unique forcing vertex at time $t=1$, the neighbors of $w$ colored at time $t=1$, must have been initially colored at time $t=0$. Since $g\geq 5$, $v$ and $w$ cannot share neighbors, since otherwise the neighbors $v$ and $w$ share, along
  with $v$, $v'$, and $w$, would induce a 4-cycle. Hence,
\begin{equation*}
\begin{split}
|S|
&\geq \big|N[v]\setminus\{v'\}\cup N[w]\setminus\{w'\}\big|\\
& = \big|N[v]\setminus\{v'\}\big|+\big|N[w]\setminus\{w'\}\big|\\
& \geq \delta + \delta \\
&=2\delta\\
&> 2\delta -2,
\end{split}
\end{equation*}
and the theorem holds.
\end{proof}

Before moving to the proofs of Theorems \ref{CutEdge} and \ref{CutVertex}, we make note of a theorem attributed to Edholm Hogben Hyunh LaGrange and Row:

\begin{theorem}[Edholm-Hogben-Hyunh-LaGrange-Row \cite{Edholm}]\label{cut vertex cut edge}
Let $G$ be a graph on $n\geq 2$ vertices, then:

	\begin{enumerate}
	
		\item For $v\in V(G)$, $Z(G)-1\leq Z(G-v)\leq Z(G)+1$.
		
		\item For $e\in E(G)$, $Z(G)-1\leq Z(G-e)\leq Z(G)+1$.
	
	\end{enumerate}
	
\end{theorem}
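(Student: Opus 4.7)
The plan for both parts is the same template: given a minimum zero forcing set of one graph, I construct a zero forcing set of the modified graph by adjoining a single carefully chosen vertex, then verify the construction by an induction on time comparing the states of the two forcing processes.

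For part 1, the direction $Z(G)\leq Z(G-v)+1$ is the easier one: if $S'$ is a minimum zero forcing set of $G-v$, then $S'\cup\{v\}$ is a zero forcing set of $G$, because $v$ is initially colored in $G$ and so never appears as a white neighbor, meaning every force performed in $G-v$ also goes through in $G$ at the same time step. For the reverse direction $Z(G-v)\leq Z(G)+1$, I start from a minimum zero forcing set $S$ of $G$ together with its associated forcing chain decomposition; the chain containing $v$ splits into at most two pieces when $v$ is removed, and I patch it by adjoining to $S\setminus\{v\}$ the vertex $u$ that $v$ forces (if any). This yields a set $S'$ of size at most $|S|+1$, and an induction on time shows that every force in $G$ not involving $v$ also occurs in $G-v$ from $S'$, since any such force's forcer has identical neighborhood in $G$ and in $G-v$; the single force performed by $v$ itself is replaced by $u$ being initially colored.

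For part 2, with $e=\{x,y\}$, the same template applies. For the direction $Z(G-e)\leq Z(G)+1$, I take a minimum zero forcing set $S$ of $G$ and run its forcing process, letting $t_x$ and $t_y$ denote the first times at which $x$ and $y$ become colored, with $t_x\leq t_y$ without loss of generality. I set $S'=S\cup\{y\}$ and prove by induction that the colored set in $G-e$ from $S'$ always contains the colored set in $G$ from $S$. The only delicate subcases are forces performed by $x$ or by $y$, because removing $e$ shrinks their neighborhoods; here the ordering $t_x\leq t_y$ guarantees that $x$ is already colored whenever $y$ acts as a forcer, so the missing neighbor in $G-e$ causes no obstruction. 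The reverse inequality $Z(G)\leq Z(G-e)+1$ is symmetric, starting from a minimum zero forcing set of $G-e$ and adjoining the endpoint of $e$ that is colored later in the $G-e$ forcing process.

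The main obstacle is getting the inductive verifications right. The key observation making the induction work is that whenever a force $w\to w'$ occurs in the original graph, every neighbor of $w$ other than $w'$ is already colored at that time step; hence deleting one of those colored neighbors (namely $v$, or $y$ as a neighbor of $x$) never introduces a new white neighbor to $w$, and the force persists in the modified graph. Handling the single remaining case, in which the deleted vertex or edge endpoint is itself the forcer rather than the forced neighbor of someone else, is exactly what is purchased by the one extra initial vertex adjoined at the start.
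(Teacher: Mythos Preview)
The paper does not give its own proof of this theorem; it is quoted from Edholm--Hogben--Huynh--LaGrange--Row and used as a black box in the proofs of Theorems~\ref{CutVertex} and~\ref{CutEdge}. So there is nothing in the paper to compare your argument against.

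Your argument is essentially the standard one and is correct, with one small slip in the justification for Part~1. In the inductive step for the inequality $Z(G-v)\le Z(G)+1$ you write that ``any such force's forcer has identical neighborhood in $G$ and in $G-v$.'' That is not literally true: if the forcer $w$ happens to be adjacent to $v$, then $N_{G-v}(w)=N_G(w)\setminus\{v\}$, not $N_G(w)$. What you actually need (and what makes the induction go through) is the containment $N_{G-v}(w)\subseteq N_G(w)$: since every $G$-neighbor of $w$ other than $w'$ is already colored, the same is true of the possibly smaller set of $G-v$-neighbors, and the force $w\to w'$ is still valid in $G-v$. With this correction, the induction $C_t\setminus\{v\}\subseteq C'_t$ is exactly right, and the only force lost---the one performed by $v$---is compensated by placing $u$ in $S'$ from the start.

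Your treatment of Part~2 is clean. The choice to adjoin the \emph{later}-colored endpoint $y$ of $e$ is the key: it guarantees that whenever $y$ acts as a forcer in the original process, $x$ is already colored, so the extra neighbor $x$ that $y$ acquires (or loses) across the edge deletion never creates a new white neighbor. The symmetric argument for $Z(G)\le Z(G-e)+1$ works for the same reason.
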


First we consider graphs with a cut vertex, such that after removal of such a vertex, the resulting graph has at least one component with girth at least 5. We improve on Proposition \ref{trivial} by close to a factor of 3 for such graphs.

\begin{theorem}\label{CutVertex}
Let $G$ be a graph with minimum degree at least 3 and a cut vertex $v$ such that $G-{v}$ has a component with girth at least 5. Then,
\begin{equation*}
3\delta -6\leq Z(G).
\end{equation*}
\end{theorem}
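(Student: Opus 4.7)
The plan is to reduce to bounds already established in the paper by first deleting the cut vertex. By Theorem \ref{cut vertex cut edge}, $Z(G) \geq Z(G-v) - 1$, so it suffices to show $Z(G-v) \geq 3\delta - 5$.

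Let $H_1, H_2, \ldots, H_k$ be the connected components of $G-v$. Because $v$ is a cut vertex, $k \geq 2$. By hypothesis we may take $H_1$ to have girth at least $5$, and let $H_2$ denote any other component. A vertex $u \in V(H_i)$ has all of its $G$-neighbors lying in $V(H_i)$ or equal to $v$, so deleting $v$ drops each vertex's degree by at most one; hence $\delta(H_i) \geq \delta - 1 \geq 2$ for every $i$.

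Next I would observe that if $S$ is any zero forcing set of $G-v$, then $S \cap V(H_i)$ is a zero forcing set of $H_i$ for each $i$, since the forcing rule within a single component is self-contained in a disconnected graph. Summing over components therefore gives $Z(G-v) \geq Z(H_1) + Z(H_2)$. Applying Theorem \ref{girth5} to $H_1$ (which has girth at least $5$ and $\delta(H_1) \geq \delta - 1 \geq 2$) yields $Z(H_1) \geq 2(\delta - 1) - 2 = 2\delta - 4$, and Proposition \ref{trivial} applied to $H_2$ gives $Z(H_2) \geq \delta(H_2) \geq \delta - 1$. Combining,
\begin{equation*}
Z(G) \;\geq\; Z(G-v) - 1 \;\geq\; (2\delta - 4) + (\delta - 1) - 1 \;=\; 3\delta - 6.
\end{equation*}

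The main subtlety to verify is the decomposition inequality $Z(G-v) \geq \sum_i Z(H_i)$, but this follows immediately from locality of forcing on a disconnected graph. The hypothesis $\delta \geq 3$ enters essentially because it guarantees $\delta(H_1) \geq 2$, which is precisely what is needed to invoke Theorem \ref{girth5}; without it, the key step would fail.
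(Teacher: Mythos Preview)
Your proof is correct and follows essentially the same route as the paper: delete the cut vertex via Theorem~\ref{cut vertex cut edge}, use additivity of $Z$ over the components of $G-v$, apply Theorem~\ref{girth5} to the girth-$\ge 5$ component and Proposition~\ref{trivial} to another component, and combine. If anything, your version is slightly cleaner in that it handles $k\ge 2$ components explicitly and notes why $\delta\ge 3$ is needed to feed $\delta(H_1)\ge 2$ into Theorem~\ref{girth5}.
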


\begin{proof}
Let $G$ be a graph with minimum degree $\delta \geq 3$ and a cut vertex $v$ such that $G-{v}$ has a component with girth at least 5. Let $G-{v}$ have components $H_{1}$ and $H_{2}$. Without loss of generality suppose $H_{1}$ has girth at least 5. Recall $Z(G-{v})\leq Z(G)+1$ by Theorem \ref{cut vertex cut edge}. Since zero forcing is additive with respect to disjoint components we have
\begin{equation*}
Z(H_{1})+Z(H_{2})\leq Z(G)+1.
\end{equation*}
Further, note that $\delta(H_{1},H_{2}) \leq \delta -1$, since deleting $v$ from $G$ at most reduced the degree of the components by 1. By Theorem \ref{girth5}, we know that 
\begin{equation*}
2(\delta -1)-2\leq Z(H_{1}).
\end{equation*}
By the minimum degree lower bound we know $\delta - 1 \leq Z(H_{2})$, and hence
\begin{equation*}
\begin{split}
(2(\delta -1)-2) + (\delta -1)
&= 3\delta -5\\ 
&\leq Z(H_{1})+Z(H_{2})\\
&\leq Z(G)+1.
\end{split}
\end{equation*} 
Rearranging terms, we get our desired result $3\delta -6\leq Z(G)$.
\end{proof}

For graphs with a cut edge and girth at least 5, we can drastically improve the minimum degree lower bound, as illustrated by the following theorem.

\begin{theorem}\label{CutEdge}
Let $G$ be a graph with minimum degree $\delta\geq 3$, girth $g\geq 5$, and cut edge $e$. Then,
\begin{equation*}
4\delta-9\leq Z(G).
\end{equation*}
\end{theorem}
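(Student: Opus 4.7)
The plan is to mimic the strategy of Theorem~\ref{CutVertex}, but with the cut edge playing the role of the cut vertex. Let $e = xy$ be the cut edge and write $G - e = H_1 \sqcup H_2$ with $x \in H_1$ and $y \in H_2$. First, by Theorem~\ref{cut vertex cut edge}(2), we have $Z(G - e) \leq Z(G) + 1$. Since the zero forcing number is additive on disjoint components, this gives
\begin{equation*}
Z(H_1) + Z(H_2) \leq Z(G) + 1.
\end{equation*}

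Next, I will verify that Theorem~\ref{girth5} applies to each $H_i$. Deleting $e$ changes only the degrees of $x$ and $y$, and only by one each, so $\delta(H_i) \geq \delta - 1 \geq 2$ for $i = 1, 2$. Deleting edges cannot shrink the girth, so $g(H_i) \geq g \geq 5$; moreover, since $\delta(H_i) \geq 2$, each $H_i$ is not a tree and actually contains a cycle. Thus Theorem~\ref{girth5} yields
\begin{equation*}
Z(H_i) \geq 2(\delta - 1) - 2 = 2\delta - 4 \qquad (i = 1, 2).
\end{equation*}

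Adding these two estimates and combining with the bound from the first step gives $4\delta - 8 \leq Z(H_1) + Z(H_2) \leq Z(G) + 1$, which rearranges to $Z(G) \geq 4\delta - 9$, as desired.

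The proof is essentially a one-page assembly of already-established results; there is no genuine combinatorial obstacle. The only point that requires care is the bookkeeping on each component: making sure both that $\delta(H_i) \geq 2$ (so Theorem~\ref{girth5} is applicable at all) and that the $2\delta - 4$ estimate uses the reduced minimum degree $\delta - 1$ rather than $\delta$ itself. Both follow directly from the hypotheses $\delta(G) \geq 3$ and $g(G) \geq 5$ together with the observation that edge deletion affects the degree of at most one vertex per side.
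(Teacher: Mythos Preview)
Your proof is correct and essentially identical to the paper's: both delete the cut edge, use Theorem~\ref{cut vertex cut edge}(2) together with additivity of $Z$ over components, observe that each component has minimum degree at least $\delta-1\ge 2$ and girth at least $5$, apply Theorem~\ref{girth5} to each piece, and add. The only cosmetic difference is that the paper phrases the girth check as ``no cut edge lies on a cycle'' whereas you say ``deleting edges cannot decrease the girth''; these are equivalent here.
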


\begin{proof}
Let $G$ be a graph with minimum degree $\delta \geq 3$, girth $g\geq 5$, and cut edge $e$ such that $G-\{e\}= H_{1}\cup H_{2}$. Since no cut edge of $G$ may lie on a cycle, we know that each component $H_{1}$ and $H_{2}$ has girth at least 5. Recall $Z(G-{e})\leq Z(G)+1$, from Theorem \ref{cut vertex cut edge}. Since zero forcing is additive with respect to disjoint components we have 
\begin{equation*}
Z(H_{1})+Z(H_{2})\leq Z(G)+1.
\end{equation*}
Note that $\delta(H_{1},H_{2}) \geq\delta -1$ since deleting $e$ from $G$ at most reduced the degree of the components by 1. By Theorem \ref{girth5}, we know that,
\begin{equation*}
2(\delta -1)-2\leq Z(H_{i}),\:\: i=1,2.
\end{equation*}
Hence, 
\begin{equation*}
\begin{split}
(2(\delta -1)-2) + (2(\delta -1)-2)
&=4\delta -8\\
&\leq Z(H_{1})+Z(H_{2})\\
&\leq Z(G)+1.
\end{split}
\end{equation*}
 Rearranging, we get our desired result, $4\delta -9\leq Z(G)$.
\end{proof}

The following theorem generalizes the previous two theorems to include $k$-connected graphs:

\begin{theorem}\label{sequencedelete}
Let $G$ be a graph and suppose there exists a set of vertices $K=\{v_{1}, ...,v_{k}\}$ such that $\delta (G\setminus K)\geq 2$ and $G\setminus K$ has induced girth at least 5. Then,
\begin{equation*}
2\delta - 3k -2 \leq Z(G).
\end{equation*}
\end{theorem}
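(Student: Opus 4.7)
The plan is to reduce to Theorem \ref{girth5} applied to the induced subgraph $H := G \setminus K$, using Theorem \ref{cut vertex cut edge} to bound how much $Z$ can change when the $k$ vertices of $K$ are removed one at a time. All the real work has already been done in the previous theorems; the argument is essentially bookkeeping.

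First I would peel off the vertices of $K$ one by one. Writing $G_0 := G$ and $G_i := G_{i-1} - v_i$ for $1 \leq i \leq k$, iterated application of the vertex-deletion half of Theorem \ref{cut vertex cut edge} gives $Z(G_i) \leq Z(G_{i-1}) + 1$, hence
\[
Z(H) \;=\; Z(G_k) \;\leq\; Z(G) + k.
\]

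Next I would bound $\delta(H)$ from below: each vertex $u \in V(H)$ has at most $k$ of its $G$-neighbors lying in $K$, so $\deg_H(u) \geq \delta - k$. By hypothesis $\delta(H) \geq 2$ and $H$ has girth at least $5$, so Theorem \ref{girth5} applies to $H$ and yields
\[
Z(H) \;\geq\; 2\delta(H) - 2 \;\geq\; 2(\delta - k) - 2 \;=\; 2\delta - 2k - 2.
\]
Chaining with the previous inequality gives $2\delta - 2k - 2 \leq Z(G) + k$, i.e.\ $Z(G) \geq 2\delta - 3k - 2$, as desired.

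The only subtlety is the degenerate regime where $\delta - k < 2$; there the chain $\delta(H) \geq \delta - k$ is weaker than the assumed $\delta(H) \geq 2$, and one would only conclude $Z(H) \geq 2$. However, in that regime $2\delta - 3k - 2 \leq -k \leq 0 \leq Z(G)$, so the desired inequality holds trivially. Thus the main ``obstacle'' is really just separating the substantive case $\delta - k \geq 2$ from this trivial remainder; no new combinatorial idea beyond the machinery already established is needed.
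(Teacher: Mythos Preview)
Your proof is correct and is essentially identical to the paper's own argument: iterate the vertex-deletion bound from Theorem~\ref{cut vertex cut edge} to obtain $Z(G\setminus K)\leq Z(G)+k$, then apply Theorem~\ref{girth5} to $G\setminus K$ together with $\delta(G\setminus K)\geq \delta-k$ and rearrange. Your separate treatment of the degenerate regime $\delta-k<2$ is more careful than the paper (which simply uses $\delta(G\setminus K)\geq \delta-k$ throughout), but in fact that case analysis is unnecessary: since $\delta(G\setminus K)\geq \delta-k$ holds regardless, the chain $Z(G\setminus K)\geq 2\delta(G\setminus K)-2\geq 2(\delta-k)-2$ already goes through uniformly once the hypothesis $\delta(G\setminus K)\geq 2$ guarantees Theorem~\ref{girth5} applies.
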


\begin{proof}
Let $K$ be a set of $k$ vertices whose removal from $G$ forms a triangle-free $C_{4}$-free graph with minimum degree at least 2. Next observe that $Z(G\setminus K)\leq Z(G)+k$. Since removing a vertex will most decrease the minimum degree of $G$ by one, removing $k$ vertices will at most reduce the minimum degree of $G$ by $k$. By Theorem 2, we have $2\delta(G\setminus K) -2 \leq Z(G\setminus K)\leq Z(G)+k$. Rearranging terms we get $2\delta - 3k - 2\leq Z(G)$. 
\end{proof}

\section{A General Conjecture}\label{conj}

In this section, we make a conjecture regarding improvements to Theorems 1 and \ref{girth5}. We provide intuitive, numerical, and theoretical justification for these conjectures. 
We conjecture a lower bound for $Z(G)$ that is dependent on both the girth $g$ and minimum degree $\delta$:

\begin{conj}\label{girthconj}
Let $G$ be a graph with girth $g \ge 3$ and minimum degree $\delta \geq 2$. Then,
\begin{equation*}
(g-3)(\delta - 2) + \delta \le Z(G).
\end{equation*}
\end{conj}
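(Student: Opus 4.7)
The plan is to generalize the forcing-chain analysis underlying Theorems \ref{plus one} and \ref{girth5}. Let $S$ be a minimum zero forcing set, let $v_0 \in S$ be a vertex that performs the first forcing step (at time $t = 1$), and let $v_0$ force $v_1$. Since $v_1$ becomes active at $t = 1$, the process extends $v_0$ into a forcing chain $v_0 \to v_1 \to v_2 \to \cdots \to v_k$, where each $v_i$ eventually forces $v_{i+1}$. I would first argue that this chain can be taken to have at least $g - 2$ vertices (handling shorter chains as a separate small case by appealing to the existence of other simultaneously-forcing chains), and then exhibit $\delta + (g-3)(\delta - 2)$ distinct members of $S$ in the closed neighborhood of $v_0, v_1, \ldots, v_{g-3}$.

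The key structural step is to define the \emph{rib sets}
\begin{equation*}
A_0 = N[v_0] \setminus \{v_1\}, \qquad A_i = N(v_i) \setminus \{v_{i-1}, v_{i+1}\} \text{ for } 1 \le i \le g-3,
\end{equation*}
satisfying $|A_0| \ge \delta$ and $|A_i| \ge \delta - 2$. The girth hypothesis makes these rib sets pairwise disjoint: if $u \in A_i \cap A_j$ with $i < j \le g-3$, then the path $v_i - u - v_j$ combined with the chain path from $v_i$ to $v_j$ yields a cycle of length $j - i + 2 \le g - 1$, contradicting the girth. Similarly, no chain vertex $v_j$ (with $j \neq 0$) can lie in any $A_i$, as this would produce a cycle of length at most $g - 2$. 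Summing, $|A_0 \cup A_1 \cup \cdots \cup A_{g-3}| \ge \delta + (g-3)(\delta - 2)$.

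The main step, and the principal obstacle I anticipate, is showing that every vertex in each $A_i$ lies in the \emph{initial} set $S$, rather than merely in some $S_t$ for $t \ge 1$. For $A_0$ this is immediate from the definition of forcing at $t = 1$. For $i \ge 1$, a vertex $u \in A_i$ must be colored when $v_i$ fires, but $u$ could a priori be colored through a different forcing chain. My plan to rule this out is a swap-minimality argument in the style of Lemma \ref{minimumlemma}: assuming some $u \in A_i$ is forced from outside $S$, identify a replaceable $z \in S$ and construct a smaller zero forcing set by removing $z$ and re-running the process, contradicting the minimality of $S$. The girth condition is crucial for ensuring that no short alternative forcing paths interfere with the replacement.

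Implementing this swap carefully for arbitrary $g$ is the chief technical difficulty, particularly because several interacting forcing chains may all contribute vertices to the rib sets. Two fallback strategies I would pursue if a direct swap argument resists formalization are: (i) induction on $g$, using Theorem \ref{cut vertex cut edge} or Theorem \ref{sequencedelete} to relate $Z(G)$ to $Z(G - v)$ for a vertex whose removal decreases the girth by one; and (ii) an approach through Proposition \ref{mrz}, bounding $mr(G)$ from above by matrix-theoretic techniques adapted to graphs of large girth, in the spirit of Deaett's semidefinite minimum rank bound for triangle-free graphs.
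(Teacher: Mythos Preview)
The statement you are attempting is labeled a \emph{conjecture} in the paper and is not proved there; the authors supply only heuristic reasoning and a partial result (Proposition~\ref{partialconj}) establishing it for $g>6$ and sufficiently large $\delta$ via tree-width bounds. Your rib-set construction and disjointness argument reproduce the authors' heuristic almost verbatim: they too follow the chain $v_0 \to v_1 \to \cdots$, count $\delta-2$ off-chain neighbors at each of $v_1,\ldots,v_{g-3}$, and explicitly flag the assertion ``under the assumption that each of these vertices requires a distinct initially colored vertex'' as the unproven step.

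The obstacle you identify --- that a rib vertex $u \in A_i$ with $i \ge 1$ may be colored by some other forcing chain rather than lie in $S$ --- is precisely the gap, and your proposed swap argument does not close it. In Lemma~\ref{minimumlemma} the swap succeeds because the removed vertex $z$ is adjacent to $w$, so once $w$ is colored it immediately forces $z$ back; the replacement is local and costs one time step. For a rib vertex $u \in A_i$ there is no such canonical replaceable $z \in S$: the chain that colors $u$ may be long, may branch, and may feed several rib sets at once, so deleting any single initial vertex can stall multiple chains and there is no single vertex positioned to restore it. Your fallback (i) is also problematic, since deleting a vertex from a graph of girth $g$ need not lower the girth at all, let alone by exactly one; and fallback (ii) would require minimum-rank upper bounds that grow linearly in $g$, which are not available. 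In short, your plan matches the paper's intuition, but the central step is exactly what remains open.
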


The intuitive reasoning behind this conjecture is the following. Let $G$ be a graph with girth $g$ and minimum degree $\delta$. Consider any path $v_0 \to v_1 \to \ldots \to v_\ell$ such that $v_i$ forces $v_{i+1}$ for all $i$ and $v_0$ forces $v_1$ at $t=0$. It must be the case that, for all $0<j<g$ and $0 \le k \le g$, $v_j$ is not adjacent to $v_k$. Hence, $v_j$ requires at least $\delta - 2$ neighbors outside of the path to be colored. Under the assumption that each of these vertices requires a distinct initially colored vertex in order to become colored, each $v_1, v_2, \ldots, v_{g-3}$  require a total of $(\delta-2)(g-3)$ initially colored vertices. Note that beginning with $j=g-2$, the neighbors of $v_j$ may, in fact be the neighbors of $v_0$ without violating the girth condition. Hence, together with the initial $\delta$ vertices needed for $v_0$ to force $v_1$ requires at least $\delta+(\delta-2)(g-3)$ vertices to be initially colored.

In addition, we can prove the following:

\begin{prop}\label{partialconj}
Given girth $g>6$, Conjecture \ref{girthconj} is true for sufficiently large minimum degree $\delta$, regardless of  $n$.
\end{prop}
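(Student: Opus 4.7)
The plan is to argue by contradiction and formalize the intuitive paragraph preceding Conjecture \ref{girthconj}. Assume $G$ has girth $g \ge 7$ and minimum degree $\delta$, and suppose toward a contradiction that $S$ is a zero forcing set with $|S| < (g-3)(\delta - 2) + \delta$. First I would decompose the zero forcing process into $|S|$ vertex-disjoint forcing chains rooted at the vertices of $S$: each vertex of $V \setminus S$ has a unique forcer, and every vertex forces at most once, so the forcing ``forest'' is actually a disjoint union of $|S|$ paths covering $V$. The Moore bound gives
\[
n \;\ge\; 1 + \delta\sum_{i=0}^{\lfloor (g-1)/2\rfloor - 1}(\delta - 1)^i \;=\; \Theta\bigl(\delta^{\lfloor(g-1)/2\rfloor}\bigr)
\]
for fixed $g \ge 7$. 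Since the assumed $|S|$ is only linear in $\delta$, the average chain length $n/|S|$ is $\Omega(\delta^{\lfloor(g-1)/2\rfloor - 1}/g)$, which exceeds $g - 2$ once $\delta$ is large enough; by pigeonhole there is a chain $P = v_0, v_1, \ldots, v_\ell$ with $\ell \ge g - 2$.

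Once a long chain is in hand, I would run the side-neighbor count that motivates the conjecture. Set $N_0^* := N(v_0) \setminus \{v_1\}$ and $N_j^* := N(v_j) \setminus \{v_{j-1}, v_{j+1}\}$ for $1 \le j \le g - 3$. A short argument in the spirit of the proof of Theorem \ref{girth5} shows that any overlap among $\{v_0\}, N_0^*, N_1^*, \ldots, N_{g-3}^*$, or with the chain vertices $\{v_1, \ldots, v_{g-2}\}$, would produce a cycle of length at most $g - 1$, contradicting the girth hypothesis. Summing yields
\[
\Big|\{v_0\} \cup N_0^* \cup \bigcup_{j=1}^{g-3} N_j^*\Big| \;\ge\; 1 + (\delta - 1) + (g - 3)(\delta - 2) \;=\; (g-3)(\delta - 2) + \delta.
\]

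The crux, and the step I expect to be the main obstacle, is to promote this neighborhood count into a lower bound on $|S|$ itself, since the side-neighbors above are merely colored by the time $v_j$ fires and are not a priori in $S$. Any side-neighbor $u$ missing from $S$ is forced by a unique predecessor $p(u)$, and a quick girth argument rules out $p(u) \in P$, so $u$ and $p(u)$ lie on a chain $Q \ne P$. I would then bound how efficiently a single chain $Q$ can harvest side-neighbors of $P$: the girth condition forces two harvested side-neighbors $u_1 \in N_{j_1}^*$ and $u_2 \in N_{j_2}^*$ occupying positions $a < b$ of $Q$ to satisfy $b - a \ge g - 2 - |j_1 - j_2|$, so any $Q$ that harvests many side-neighbors of $P$ must itself be long and carry its own side-neighbor system. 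Applying the Moore-bound/chain-decomposition bookkeeping recursively to these secondary chains is where ``sufficiently large $\delta$'' becomes essential; once $\delta$ dominates the constants depending only on $g$, the number of chains (hence elements of $S$) required to cover the side-neighbors of $P$ must exceed $(g-3)(\delta-2) + \delta$, delivering the desired contradiction.
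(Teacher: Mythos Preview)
Your proposal has a genuine gap at exactly the point you flag as ``the main obstacle.'' You correctly observe that the side-neighbors $N_j^*$ need only be \emph{colored} by the time $v_j$ fires, not initially colored, and you propose to track how secondary chains harvest these side-neighbors and then recurse. But the recursion is never carried out: you give no invariant that survives it, no accounting of why the secondary chains contribute \emph{distinct} roots in $S$ rather than reusing ones already counted, and no termination argument. The harvesting inequality $b - a \ge g - 2 - |j_1 - j_2|$ bounds spacing along $Q$ but does not by itself force $Q$'s root to be new, and once you pass to a secondary chain the global Moore bound on $n$ no longer helps---you are no longer looking at the whole graph. As written, the plan reduces the proposition to a counting problem at least as hard as the original statement.

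The paper avoids all of this by routing through tree-width. It cites a result giving
\[
tw(G) \;\ge\; \frac{(\bar d - 1)^{\lfloor (g-1)/2 \rfloor - 1}}{12(g+1)}
\]
and combines it with the known bound $tw(G) \le Z(G)$. For $g > 6$ the exponent $\lfloor (g-1)/2 \rfloor - 1$ is at least $2$, so this lower bound is a polynomial of degree $\ge 2$ in $\delta$, whereas the conjectured bound $(g-3)(\delta-2)+\delta$ is linear in $\delta$; for $\delta$ large enough (depending only on $g$) the former dominates. No direct analysis of the forcing process is needed.
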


To prove this proposition, we use the \emph{tree-width} parameter, denoted ${tw}(G)$, as an intermediary. Tree-width is a graph parameter regarding the decomposition of the graph into sets, however, the details are not important for our application. For more details, refer to \cite{West}.

We apply two results together to prove Proposition \ref{partialconj}: an exponential lower bound on tree-width in terms of $\delta$ and $g$, and a lower bound on $Z(G)$ in terms of tree-width:

\begin{theorem}[Chandrana and Subramanian \cite{girth_tw}]
Let $G$ be a graph with average degree $\bar d$, girth $g$, and tree-width $tw(G)$. Then,
\[ tw(G) \ge
\frac
{(\bar d-1)^{\lfloor(g-1)/2\rfloor -1}}
{12(g + 1)}
\]
\end{theorem}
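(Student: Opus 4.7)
The plan is to prove this lower bound by combining the balanced-separator characterization of tree-width with a Moore-type lower bound on sizes of shallow balls in high-girth graphs. First I would invoke the standard fact that if $tw(G) = t$ then $G$ admits a \emph{balanced separator} $S \subseteq V(G)$ with $|S| \le t+1$ whose removal leaves every connected component of $G - S$ with at most $\alpha n$ vertices for some fixed $\alpha < 1$ (say $\alpha = 1/2$). It therefore suffices to show that no set $S$ with $|S| < \frac{(\bar d - 1)^{\lfloor (g-1)/2 \rfloor - 1}}{12(g+1)}$ can serve as a balanced separator of $G$.

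Next, I would exploit the girth hypothesis. Setting $r = \lfloor (g-1)/2 \rfloor$, the ball $B_r(v)$ around any vertex $v$ induces a tree in $G$, since any cycle lying inside $B_r(v)$ would have length at most $2r \le g-1 < g$. Breadth-first expansion from $v$ then forces $|B_r(v)| \ge d_v (d_v - 1)^{r-1}$ whenever $v$ has high degree on each BFS layer. Averaging over $v \in V(G)$ using $\bar d = \frac{1}{n}\sum_v d_v$ and discarding low-degree vertices by a Markov-style pruning, I would extract a set $V_0 \subseteq V(G)$ of linear size whose elements each satisfy $|B_r(v)| \ge c (\bar d - 1)^{r-1}$ for an absolute constant $c$.

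Finally, I would close the argument by pigeonhole. A putative small separator $S$ can intersect only a handful of these large balls, so the overwhelming majority of them lie entirely inside a single connected component of $G - S$. Because the balls collectively cover a linear fraction of $V(G)$, some component of $G - S$ must accumulate enough balls to exceed $\alpha n$ vertices, contradicting the balanced-separator assumption. The main obstacle, and the origin of the explicit $12(g+1)$ denominator, is controlling ball overlap: a secondary girth-based calculation is required to bound, for each vertex $x$, the number of distinct balls $B_r(u)$ that can contain $x$, so that large total ball mass translates into many \emph{disjoint} large balls inside the same component. Threading this overlap estimate through the average-degree-to-per-vertex-ball-size passage is the most delicate part of the argument and is where the bulk of the technical work resides.
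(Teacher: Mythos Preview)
The paper does not supply a proof of this theorem: it is quoted verbatim as an external result of Chandran and Subramanian and is used only as a black box in the proof of Proposition~\ref{partialconj}. Consequently there is no ``paper's own proof'' to compare your proposal against.

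For what it is worth, your outline is broadly in the spirit of how such girth/tree-width lower bounds are established in the original source: one combines the separator interpretation of tree-width with Moore-type growth of balls of radius $\lfloor (g-1)/2\rfloor$ in a high-girth graph. Your sketch, however, is only a plan and leaves the genuinely delicate steps unexecuted, in particular the passage from \emph{average} degree $\bar d$ to a per-vertex ball-size bound of order $(\bar d-1)^{\lfloor (g-1)/2\rfloor-1}$, and the overlap/packing estimate that produces the explicit constant $12(g+1)$. If you intend to actually reprove the theorem rather than cite it, those two points are where the real work lies; as written, the proposal is a credible roadmap but not yet a proof.
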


\begin{theorem}[Barioli-Barrett-Fallat-Hall-Hogben-Shader-van der Holst \cite{zf_tw}]
\[ tw(G) \le Z(G) \]
\end{theorem}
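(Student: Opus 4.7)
The plan is to prove $tw(G) \le Z(G)$ by constructing, from a minimum zero forcing set $S$ of $G$, a path decomposition of $G$ of width at most $Z(G)$; since treewidth is bounded above by pathwidth, this suffices.

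Starting from $S$ with $|S| = Z(G)$, I would linearize the forcing process by breaking ties arbitrarily, producing a sequence of individual forces $(v_1 \to w_1), \ldots, (v_N \to w_N)$ with $N = n - Z(G)$, where at step $i$ the colored vertex $v_i$ forces its unique white neighbor $w_i$. This linearization simultaneously partitions $V(G)$ into exactly $Z(G)$ pairwise vertex-disjoint \emph{forcing chains}, each a path starting from a vertex of $S$ and extended by successive forces. Set $B_0 = S$, and for each $t \ge 1$ let
\[
B_t \;=\; \{\text{colored vertices with at least one white neighbor just before step } t\}\;\cup\;\{w_t\}.
\]
Arrange these bags on a path $B_0 - B_1 - \cdots - B_N$.

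The three path-decomposition axioms would then be verified. Vertex cover is immediate since $S \subseteq B_0$ and each forced $w_t \in B_t$. For edge cover, consider an edge $\{x,y\} \in E$ with $y$ colored no earlier than $x$; if both lie in $S$ then $\{x,y\} \subseteq B_0$, otherwise $y = w_t$ for some $t$, and at the instant just before step $t$ the vertex $x$ is colored with $y$ as a white neighbor, so $x$ lies in the active part of $B_t$ while $y = w_t$ lies in $B_t$ as well. The contiguity axiom follows because the set of white neighbors of any vertex is monotonically non-increasing in time, so once a vertex becomes inactive it stays inactive permanently.

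The crux of the argument, and the main technical obstacle, is the bag-size bound $|B_t| \le Z(G) + 1$ (with $|B_0| = Z(G)$), which translates into pathwidth at most $Z(G)$. The essential claim is that at each time $t$, each of the $Z(G)$ forcing chains contributes at most one active vertex to $B_t$. Indeed, within a single chain $c_0 \to c_1 \to \cdots \to c_\ell$, when $c_j$ forces $c_{j+1}$ the definition of the forcing rule requires $c_{j+1}$ to have been the unique white neighbor of $c_j$ at that moment; hence immediately after this step $c_j$ has no white neighbors, and since the white-neighbor count cannot increase, $c_j$ is permanently inactive thereafter. Thus at any given time only the most-recently-colored vertex of each chain can be active, giving at most $Z(G)$ active vertices, and adding the single extra $w_t$ yields bag size at most $Z(G) + 1$, completing the construction.
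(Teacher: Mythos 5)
The paper does not actually prove this statement: it is imported as a black box from \cite{zf_tw} and used only in the proof of Proposition \ref{partialconj}, so there is no in-paper argument to compare against. That said, your construction is correct. The decisive point checks out: the linearized forcing order partitions $V$ into $Z(G)$ vertex-disjoint forcing chains, and within a chain every vertex other than the current tip has already performed its force, at which moment its forced successor was its unique white neighbor; since white neighborhoods only shrink, such a vertex never again has a white neighbor, so each chain contributes at most one vertex to the ``colored with a white neighbor'' part of any bag, giving $|B_t|\le Z(G)+1$. The cover and contiguity verifications go through exactly as you describe, yielding $tw(G)\le pw(G)\le Z(G)$; this is essentially the standard argument from the cited reference, which packages the same bound via the vertex separation number of the coloring order rather than explicit bags.
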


\begin{proof}[Proof of Proposition \ref{partialconj}]
Given $g$, choose $\delta_{min}$ large enough such that 
\[ \frac{( \delta_{min}-1)^{\lfloor(g-1)/2\rfloor -1}}
{12(g + 1)} \ge \delta_{min}+(\delta_{min}-2)(g-3). \]
This is guaranteed for $g>6$ as the left side has polynomial degree at least 2 in $\delta$.
In which case, by the previous two theorems, we have 
\begin{eqnarray*}
\delta_{min}+(\delta_{min}-2)(g-3)  &\le& \frac{(\delta_{min}-1)^{\lfloor(g-1)/2\rfloor -1}}{12(g + 1)} \\ 
&\le& \frac{(\bar d-1)^{\lfloor(g-1)/2\rfloor -1}}{12(g + 1)} \\
&\le& tw(G) \\
&\le& Z(G).
\end{eqnarray*}
Hence, given $g$, the conjecture holds true for all graphs with $\delta > \delta_{min}$.
\end{proof}

We are able to numerically verify our conjecture for a large number of graphs. However, considering that calculating the zero forcing number of a graph is NP-hard \cite{zf_np}, it is difficult to test. We write a compact algorithm in \emph{Mathematica} in order to compute $Z(G)$ using a brute-force approach. In turn, we are able to confirm Conjecture \ref{girthconj}  for  all 542 triangle-free graphs in the {\it Wolfram} database with between 9 and 22 vertices. In fact, our computation shows that for more than 37\% of these graphs, the conjecture is sharp. Conjecture \ref{girthconj} appears to be sharp for several interesting graphs including the Petersen Graph, the Heawood Graph and several families of graphs as well including cycles, $2 \times 2 \times k$ grids, complete bipartite graphs, among others. 

Furthermore, taking $g=4$ in Conjecture \ref{girthconj}, yields a conjecture this is similar in nature to the results presented in Section \ref{main}:

\begin{conj}\label{trifreeconj}
Let $G$ be a triangle-free graph with minimum degree $\delta \geq 2$. Then, 
\begin{equation*}
2\delta -2 \leq Z(G).
\end{equation*}
\end{conj}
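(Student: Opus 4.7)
The plan is to imitate the proof of Theorem \ref{girth5}, pinpointing where the girth-$5$ hypothesis was used and patching those steps under only triangle-freeness. The case $\delta = 2$ is trivial, so I would assume $\delta \geq 3$ and fix a minimum zero forcing set $S$; Proposition \ref{zero forcing cuts} supplies two forcing vertices $v$ and $w$. Following Theorem \ref{girth5}, I would split into Case 1 (both $v$ and $w$ force at $t=1$) and Case 2 ($v$ is the unique forcer at $t=1$ and $w$ forces at $t=2$), and within Case 1 split further on whether $w \in N(v)$.

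The subcase Case 1 with $w \in N(v)$ is handled directly by Lemma \ref{two neighbors}, whose proof already requires only triangle-freeness. In the other subcases, a direct inclusion--exclusion on $N[v] \setminus \{v'\}$ and $N[w] \setminus \{w'\}$ reduces the problem to establishing $|S| \geq d(v) + d(w) - |N(v) \cap N(w)|$, with a small adjustment in Case 2 where $v'$ is forced to be a common neighbor of $v$ and $w$. In Theorem \ref{girth5} the codegree $|N(v) \cap N(w)|$ was at most $1$, since any larger codegree would create a $4$-cycle; under only the triangle-free hypothesis this codegree can be as large as $\delta$. Hence the remaining task is to prove $|N(v) \cap N(w)| \leq 2$, or else to account for the excess common neighbors through some other mechanism.

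To handle the situation $|N(v) \cap N(w)| \geq 3$, note that every common neighbor $u \neq v'$ must lie in $S$, since such $u$ is a neighbor of $v$ distinct from $v'$. I would attempt a Lemma \ref{minimumlemma}-style swap: delete one such $u$ from $S$ and argue that the forcing chain can be rerouted through the remaining common neighbors, contradicting the minimality of $S$. If the swap approach fails, I would chase the forcing chain past $w$ and try to harvest additional initially colored vertices from the next forcer to cover the deficit. The main obstacle, and almost certainly the reason this is stated as a conjecture rather than a theorem, is that triangle-freeness does not locally restrict the codegree of two non-adjacent vertices: bipartite graphs may contain arbitrarily large $K_{2,k}$ substructures attached to the forcing pair $v, w$. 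Any successful proof therefore has to exploit the interaction between the zero-forcing chains and the global triangle-free structure, not merely local neighborhood counts, and it is precisely this step which the partial result Proposition \ref{partialconj} (valid only for $g > 6$) sidesteps via tree-width.
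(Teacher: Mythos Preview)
The statement you are attempting to prove is presented in the paper as an open conjecture (Conjecture~\ref{trifreeconj}); the paper does not supply a proof, and in fact introduces it as the $g=4$ specialization of the more general Conjecture~\ref{girthconj}, supported only by numerical evidence and the partial asymptotic result Proposition~\ref{partialconj}. There is therefore no proof in the paper to compare your attempt against.

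Your proposal is not a proof either, and you correctly diagnose why. The girth-$5$ argument of Theorem~\ref{girth5} controls $|N(v)\cap N(w)|\le 1$ by forbidding $4$-cycles, and this is exactly the step that collapses under mere triangle-freeness: a bipartite graph can embed $K_{2,k}$ for arbitrary $k$, so the inclusion--exclusion count $d(v)+d(w)-|N(v)\cap N(w)|$ may fall well short of $2\delta-2$. Your two suggested patches---a Lemma~\ref{minimumlemma}-style swap deleting a common neighbor, or chasing the chain one step further to harvest extra colored vertices---are plausible heuristics, but you give no argument that either succeeds; the swap in particular is not obviously sound, since removing a common neighbor $u$ of $v$ and $w$ from $S$ may prevent $w$ from forcing at its scheduled time with no guaranteed reroute. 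As you yourself concede in the final paragraph, settling this case would require global information about the interaction of the forcing chains with the triangle-free structure, which is precisely why the paper leaves the statement as a conjecture.
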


\section{Concluding Remarks and Acknowledgements}
The results presented in this paper show a link between the girth and minimum zero forcing sets. In particular, the theorems presented provide evidence in favor to Conjectures \ref{girthconj} and \ref{trifreeconj}. Further, the connection between minimum degree and minimum zero forcing sets remain, since the minimum degree appears in all of presented theorems. Future work will include working toward resolving Conjectures \ref{girthconj} and \ref{trifreeconj}, but also will include finding new lower bound on the zero forcing number in terms of simple graph parameters such as degree, size, and order.

This research was partially supported by NSF CMMI-1300477 and CMMI-1404864.

\bibliographystyle{plain}
\bibliography{bibTrangleFree}

\end{document}